\numberwithin{equation}{section}
\newtheorem{theorem}{Theorem}
\newtheorem{corollary}[theorem]{Corollary}
\newtheorem{proposition}[theorem]{Proposition}
\newtheorem{lemma}[theorem]{Lemma}
\newtheorem*{theorem*}{Theorem}
\newtheorem*{proposition*}{Proposition}
\newtheorem*{lemma*}{Lemma}
\theoremstyle{remark}
\theoremstyle{definition}
\newcommand{\N}{\mathbb{N}}
\newcommand{\R}{\mathbb{R}}
\newcommand{\C}{\mathbb{C}}
\newcommand{\K}{\mathbb{K}}
\newcommand\norm[1]{\Vert #1 \Vert}
\numberwithin{theorem}{section}
\begin{document}

\title{A note on operator tuples which are $(m,p)$-isometric as well as $(\mu,\infty)$-isometric}
\author{Philipp H. W. Hoffmann}

\date{}
\maketitle

\renewcommand{\thefootnote}{\fnsymbol{footnote}}
\begin{center}
\emph{This is a preprint of an article which has appeared in:\\ Operators and Matrices, Volume 11, Number 3 (2017), 623-633\footnote{The final publication is available under \url{http://oam.ele-math.com/}}
\\$\textrm{}$\\}
\end{center}

\begin{abstract}
We show that if a tuple of commuting, bounded linear operators
$(T_1,...,T_d) \in B(X)^d$ is both an $(m,p)$-isometry and a $(\mu,\infty)$-isometry, then the tuple $(T_1^m,...,T_d^m)$ is a 
$(1,p)$-isometry. We further prove some additional properties of the operators $T_1,...,T_d$ and show a stronger result in the case of a
commuting pair $(T_1,T_2)$. 
\end{abstract}

\maketitle



\section{Introduction}
\setcounter{page}{1}

Let in the following $X$ be a normed vector space over $\K \in \{\R, \C\}$ and let the symbol $\N$ denote the natural numbers including $0$.  

A tuple of commuting linear operators $T:=(T_1,...,T_d)$ with $T_j:X \to X$ is called an \emph{$(m,p)$-isometry} (or an \emph{$(m,p)$-isometric tuple}) if, and only if, for given $m \in \N$ and $p \in (0,\infty)$,
\begin{align}\label{def. (m,p)-tuples}
\sum_{k=0}^m (-1)^{m-k} \binom{m}{k} \sum_{|\alpha|=k} \frac{k!}{\alpha!} \norm{T^\alpha x}^p = 0, \ \ \forall x \in X.
\end{align}
Here, $\alpha:=(\alpha_1,...,\alpha_d) \in \N^d$ is a multi-index, $|\alpha|:=\alpha_1 + \cdots + \alpha_d$ the sum of its entries, $\frac{k!}{\alpha!} := \frac{k!}{\alpha_1! \cdots \alpha_d!}$ a multinomial coefficient and $T^\alpha:=T_1^{\alpha_1} \cdots T_d^{\alpha_d}$, where $T_j^0:=I$ is the identity operator.

Tuples of this kind have been introduced by Gleason and Richter \cite{GleaRi} on Hilbert spaces (for $p=2$) and have been further studied on 
general normed spaces in \cite{HoMa}. The tuple case generalises the single operator case, originating in the works of Richter \cite{Ri} and 
Agler \cite{Ag} in the 1980s and being comprehensively studied in the Hilbert space case by Agler and Stankus \cite{AgStanI}; the single 
operator case on Banach spaces has been introduced in the case $p=2$ in \cite{Bo} and \cite{SidAh} and in its general form by Bayart in \cite{Bay}. We remark that boundedness, although usually assumed, is not essential for the definition of $(m,p)$-isometries, as shown by Bermúdez, Martinón and Müller in \cite{BeMaMu}. Boundedness does, however, play an important role in the theory of objects of the following kind:

Let $B(X)$ denote the algebra of bounded (i.e. continuous) linear operators on $X$. Equating sums over even and odd $k$ and then considering $p \to \infty$ in \eqref{def. (m,p)-tuples}, leads to the definition of \emph{$(m,\infty)$-isometries} (or 
\emph{$(m,\infty)$-isometric tuples}). That is, a tuple of commuting, bounded linear operators $T \in B(X)^d$ is referred to as an 
$(m,\infty)$-isometry if, and only if, for given $m \in \N$ with $m \geq 1$,
\begin{align}\label{def. (mu,infty)-tuples}
\max_{\substack{|\alpha|=0,...,m \\ |\alpha| \ \textrm{even}}} \norm{T^\alpha x} 
= \max_{\substack{|\alpha|=0,...,m \\ |\alpha| \ \textrm{odd}}} \norm{T^\alpha x}, \ \ \forall x \in X.
\end{align}     

These tupes have been introduced in \cite{HoMa}, with the definition of the single operator case appearing in \cite{HoMaOS}. Although, it is known that tuples containing unbounded operators exist which satisfy equation \eqref{def. (mu,infty)-tuples}, several important statements on 
$(m,\infty)$-isometries require boundedness. Therefore, from now on, we will always assume the operators $T_1,...,T_d$ to be bounded.

In \cite{HoMa}, the question is asked what necessary properties a commuting tuple $T \in B(X)^d$ has to satisfy if it is both an $(m,p)$-isometry and a $(\mu,\infty)$-isometry, where possibly $m \neq \mu$. In the single operator case this question is trivial and answered in \cite{HoMaOS}: If $T=T_1$ is a single operator, then the condition that $T_1$ is an $(m,p)$-isometry is equivalent to the mappings $n \mapsto \norm{T_1^nx}^p$ being polynomial of degree $\leq m-1$ for all $x \in X$. This has been already been observed for operators on Hilbert spaces in \cite{GleaRi} and shown in the Banach space/normed space case in \cite{HoMaOS}; the necessity of the mappings $n \mapsto \norm{T_1^nx}^p$ being polynomial has also been proven in \cite{Bay} and \cite{BeMaNe}. On the other hand, in \cite{HoMaOS} it is shown that if a bounded operator $T=T_1 \in B(X)$ is a $(\mu,\infty)$-isometry, then the mappings $n \mapsto \norm{T_1^nx}$ are bounded for all $x \in X$. The conclusion is obvious: if $T = T_1 \in B(X)$ is both $(m,p)$- and $(\mu,\infty)$-isometric, then for all $x \in X$ the $n \mapsto \norm{T_1^nx}^p$ are always constant and $T_1$ has to be an isometry (and, since every isometry is $(m,p)-$ and $(\mu,\infty)$-isometric, we have equivalence).

The situation is, however, far more difficult in the multivariate, that is, in the operator tuple case. Again, we have equivalence between $T=(T_1,...,T_d)$ being an $(m,p)$-isometry and the mappings $n \mapsto \sum_{|\alpha|=n}\frac{n!}{\alpha!}\norm{T^\alpha x}^p$ being polynomial of degree $\leq m-1$ for all $x \in X$. The necessity part of this statement has been proven in the Hilbert space case in \cite{GleaRi} and equivalence in the general case has been shown in \cite{HoMa}. On the other hand, one can show that if $T \in B(X)^d$ is a $(\mu,\infty)$-isometry, then the families $\left(\norm{T^\alpha x}\right)_{\alpha \in \N^d}$ are bounded for all $x \in X$, which has been proven in \cite{HoMa}. But this fact only implies that the polynomial growth of the $n \mapsto \sum_{|\alpha|=n}\frac{n!}{\alpha!}\norm{T^\alpha x}^p$ has to caused by the factors $\frac{n!}{\alpha!}$ and does not immediately give us any further information about the tuple $T$.  

There are several results in special cases proved in \cite{HoMa}. For instance, if a commuting tuple $T=(T_1,...,T_d) \in B(X)^d$ is an $(m,p)$-isometry as well as a $(\mu,\infty)$-isometry and we have $m=1$ or $\mu=1$ or $m=\mu=d=2$, then there exists one operator $T_{j_0} \in \{T_1,...,T_d\}$ which is an isometry and the remaining operators $T_k$ for $k \neq j_0$ are in particular nilpotent of order $m$. Although, we are not able to obtain such a results for general $m \in \N$ and $\mu, d \in \N \setminus \{0\}$, yet, we can prove a weaker property: In all proofs of the cases discussed in \cite{HoMa}, the fact that the tuple $(T^m_1,...,T^m_d)$ is a $(1,p)$-isometry is of critical importance (see the proofs of Theorem 7.1 and Proposition 7.3 in \cite{HoMa}). We will show in this paper that this fact holds in general for any tuple which is both $(m,p)$-isometric and $(\mu,\infty)$-isometric, for general $m$, $\mu$ and $d$. 

The notation we will be using is basically standard, with one possible exception: We will denote the tuple of $d-1$ operators obtained by removing one operator $T_{j_0}$ from $(T_1,...,T_d)$ by $T'_{j_0}$, that is $T'_{j_0}:=(T_1,...,T_{j_0-1},T_{j_0+1},...,T_d) \in B(X)^{d-1}$  (not to be confused with the dual of the operator $T_{j_0}$, which will not appear in this paper). Analogously, we denote by $\alpha'_{j_0}$ the multi-index obtained by removing $\alpha_{j_0}$ from $(\alpha_1,...,\alpha_d)$. 
We will further use the notation $N(T_j)$ for the kernel (or nullspace) of an operator $T_j$.

\section{Preliminaries}

In this section, we introduce two needed definitions/notations and compile a number of propositions and theorems from
\cite{HoMa}, which are necessary for our considerations. 

In the following, for $T \in B(X)^d$ and given $p \in (0,\infty)$, define for all $x \in X$ the sequences
$(Q^{n,p}(T,x))_{n \in \N}$ by 
\begin{equation}\label{def. of Q^(n,p)}
Q^{n,p}(T,x):=\sum_{|\alpha|=n}\frac{n!}{\alpha!}\|T^{\alpha} x\|^p.
\end{equation} 
Define further for all $\ell \in \N$ and all $x \in X$, the mappings $P^{(p)}_{\ell}(T, \cdot):X \rightarrow \R$, by 
\begin{align}\label{def. of P^((p))} \notag
	P^{(p)}_\ell(T,x):= &\sum_{k=0}^\ell (-1)^{\ell-k} \binom{\ell}{k} Q^{k,p}(T,x)\\ 
	= &\sum_{k=0}^\ell (-1)^{\ell-k} \binom{\ell}{k} \sum_{|\alpha|=k} \frac{k!}{\alpha!}\norm{T^\alpha x}^p.
\end{align}
It is clear that $T \in B(X)^d$ is an $(m,p)$-isometry if, and only if, $P^{(p)}_m(T, \cdot ) \equiv 0$.

If the context is clear, we will simply write $P_{\ell}(x)$ and $Q^n(x)$ instead of $P^{(p)}_{\ell}(T, x)$ and 
$Q^{n,p}(T,x)$.

Further, for $n,k \in \N$, define the (descending) Pochhammer symbol $n^{(k)}$ as follows: 
\begin{align*}
	n^{(k)}:= \left\{ \begin{array}{ll}
					0, &\textrm{if} \ k > n,\\[1ex]
       				\binom{n}{k}k!\ , \  &\textrm{else}.
       				\end{array} \right.
\end{align*}
Then $n^{(0)} = 0^{(0)} = 1$ and, if $n,k>0$ and $k \leq n$, we have 
\begin{equation*}
n^{(k)} = n (n-1) \cdots (n-k+1).
\end{equation*}

As mentioned above, a fundamental property of $(m,p)$-isometries is that their defining property can be expressed in terms of polynomial sequences.

\begin{theorem}[{\cite[Theorem 3.1]{HoMa}}]\label{Theorem fundamental prop. (m,p)}
$T \in B(X)^d$ is an $(m,p)$-isometry if, and only if, there exists a family of polynomials 
$f_x:\R \rightarrow \R$, $x \in X$, of degree $\leq m-1$ with $f_x|_{\N}= (Q^{n}(x))_{n \in \N}$. \footnote{\label{footnote}Set $\deg 0 := -\infty$ to account for the case $m=0$.}
\end{theorem}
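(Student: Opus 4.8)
The plan is to read the scalar quantity $P^{(p)}_m(T,x)$ as the $m$-th forward difference of the sequence $(Q^n(x))_{n\in\N}$ evaluated at the origin, and then to invoke the classical characterisation from the theory of finite differences: a real sequence $(a_n)_{n\in\N}$ coincides with the restriction to $\N$ of a polynomial of degree $\leq m-1$ if and only if all of its $m$-th forward differences $\sum_{k=0}^m(-1)^{m-k}\binom{m}{k}a_{n+k}$ vanish, for every $n\in\N$. Since $P^{(p)}_m(T,x)=\sum_{k=0}^m(-1)^{m-k}\binom{m}{k}Q^k(x)$ is precisely this difference at the single base point $n=0$, the hypothesis $P^{(p)}_m(T,\cdot)\equiv 0$ only delivers the vanishing there. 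The crux of the argument is therefore to upgrade this one equation to the vanishing of the $m$-th difference at \emph{every} base point $n$.

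The engine for this upgrade is a shift recurrence for the sequences $Q^n$. Writing $e_j$ for the $j$-th standard unit multi-index, I would use the Pascal-type identity $\frac{(n+1)!}{\alpha!}=\sum_{j:\,\alpha_j\geq 1}\frac{n!}{(\alpha-e_j)!}$ for multinomial coefficients, which follows at once from $\sum_{j=1}^d\alpha_j=n+1$. Reindexing the sum defining $Q^{n+1}(x)$ by $\beta=\alpha-e_j$ and using the commutativity of the $T_j$ (so that $T^\alpha x=T_jT^\beta x=T^\beta(T_jx)$) then gives
\[
Q^{n+1}(T,x)=\sum_{j=1}^d Q^n(T,T_j x),\qquad x\in X.
\]
Iterating this identity $n$ times, and again exploiting commutativity to disregard the order of the operator factors, yields
\[
Q^{n+k}(T,x)=\sum_{j_1,\dots,j_n=1}^d Q^{k}\big(T,\,T_{j_1}\cdots T_{j_n}x\big).
\]

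With the recurrence in hand the forward implication follows immediately: for fixed $x\in X$ and $n\in\N$,
\[
\sum_{k=0}^m(-1)^{m-k}\binom{m}{k}Q^{n+k}(x)
=\sum_{j_1,\dots,j_n=1}^d P^{(p)}_m\!\big(T,\,T_{j_1}\cdots T_{j_n}x\big)=0,
\]
since each summand vanishes by the $(m,p)$-isometry hypothesis applied to the vector $T_{j_1}\cdots T_{j_n}x$. Hence all $m$-th differences of $(Q^n(x))_n$ are zero, so the sequence is the restriction of a polynomial $f_x$ of degree $\leq m-1$, as required. The converse is the routine direction: if $Q^n(x)=f_x(n)$ with $\deg f_x\leq m-1$, then $P^{(p)}_m(T,x)$ is the $m$-th difference of a polynomial of degree strictly less than $m$, evaluated at $0$, and therefore vanishes. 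I expect the main obstacle to lie not in the finite-difference characterisation, which is standard, but in the bookkeeping of the shift recurrence — verifying the multinomial Pascal identity and checking that the iterated sum collapses correctly onto the operators applied to $x$. The degenerate case $m=0$ (where $\deg 0=-\infty$, so $f_x\equiv 0$ and $Q^0(x)=\norm{x}^p=0$ forces $X=\{0\}$) is then handled trivially.
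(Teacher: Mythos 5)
Your argument is correct and complete: the multinomial Pascal identity, the resulting shift recurrence $Q^{n+1}(T,x)=\sum_{j=1}^{d}Q^{n}(T,T_{j}x)$, and the use of its iterates to propagate the vanishing of the $m$-th forward difference from the base point $n=0$ to every base point $n$ all check out, and this is exactly the ``not immediate'' step that has to be supplied before the classical finite-difference characterisation of polynomial sequences can be applied. The paper itself only quotes this theorem from \cite{HoMa} without reproducing a proof, so there is no in-paper argument to compare against; your route is the standard one and matches the reduction the source alludes to.
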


The following statement describes the Newton-form of the Lagrange-polynomial $f_x$ interpolating $(Q^{n}(x))_{n \in \N}$. 

\begin{corollary}[{\cite[Proposition 3.2.(i)]{HoMa}}]\label{Coro. Newtonform of (m,p) (i) and P_(m-1) (ii)}
Let $m \geq 1$ and $T \in B(X)^d$ be an $(m,p)$-isometry. Then we have for all $n \in \N$
\begin{equation*}
Q^{n}(x)=\sum\limits_{k=0}^{m-1}n^{(k)}\left(\frac{1}{k!}P_k(x)\right), \ \ \forall x \in X.
\end{equation*} 
\end{corollary}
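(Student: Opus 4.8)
The plan is to lean on the polynomial characterisation from Theorem~\ref{Theorem fundamental prop. (m,p)} and then to recognise the quantities $P_\ell(x)$ as finite forward differences of the interpolating polynomial, after which the claim becomes an instance of Newton's forward-difference interpolation formula. Concretely, since $T$ is an $(m,p)$-isometry, Theorem~\ref{Theorem fundamental prop. (m,p)} supplies for each $x \in X$ a polynomial $f_x:\R \to \R$ of degree $\leq m-1$ with $f_x(n) = Q^n(x)$ for all $n \in \N$. I would fix this $x$ and carry out the whole argument for the polynomial $f_x$, reading the conclusion back in terms of $Q^n(x)$ only at the very end.

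First I would observe that, for every $\ell \in \N$,
\[
P_\ell(x) = \sum_{k=0}^{\ell}(-1)^{\ell-k}\binom{\ell}{k}Q^k(x) = \sum_{k=0}^{\ell}(-1)^{\ell-k}\binom{\ell}{k}f_x(k),
\]
which is precisely the $\ell$-th forward difference $(\Delta^\ell f_x)(0)$, where $(\Delta g)(t):=g(t+1)-g(t)$. Since $f_x$ has degree $\leq m-1$ and each application of $\Delta$ lowers the degree by one, we get $\Delta^\ell f_x \equiv 0$ for all $\ell \geq m$; in particular $P_\ell(x)=0$ whenever $\ell \geq m$.

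Next I would invoke Newton's forward-difference interpolation formula, which for any nonnegative integer $n$ is the exact finite identity
\[
f_x(n) = \sum_{k=0}^{n}\binom{n}{k}(\Delta^k f_x)(0) = \sum_{k=0}^{n}\binom{n}{k}P_k(x),
\]
this being the standard binomial inversion of the difference formula and requiring no polynomial assumption. Using the vanishing of $P_k(x)$ for $k \geq m$ established above, the summation truncates and gives $f_x(n) = \sum_{k=0}^{m-1}\binom{n}{k}P_k(x)$ for all $n \geq m-1$. For $n < m-1$ the same expression remains valid, since the additional terms carry a factor $\binom{n}{k}=0$ for $k>n$; this is exactly where the convention $n^{(k)}=0$ for $k>n$ does its work. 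Rewriting $\binom{n}{k}=n^{(k)}/k!$ and recalling $f_x(n)=Q^n(x)$ then yields $Q^n(x)=\sum_{k=0}^{m-1}n^{(k)}\bigl(\tfrac{1}{k!}P_k(x)\bigr)$ for all $n \in \N$ and all $x \in X$.

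The only genuinely delicate point is the bookkeeping in the last step: reconciling the exact interpolation identity, whose natural range of summation is $0,\dots,n$, with the fixed range $0,\dots,m-1$ demanded in the statement, uniformly in $n$. Both the vanishing of the higher differences $P_k(x)$ and the convention $n^{(k)}=0$ for $k>n$ are needed to make the two ranges agree in all cases, so I would spell out the two sub-cases $n \geq m-1$ and $n < m-1$ explicitly rather than glossing over them. Everything else reduces to the standard finite-difference calculus, for which Theorem~\ref{Theorem fundamental prop. (m,p)} furnishes the single nontrivial ingredient, namely the degree bound on $f_x$.
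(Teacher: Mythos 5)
Your proposal is correct: the identification $P_\ell(x)=(\Delta^\ell f_x)(0)$, the vanishing of differences of order $\geq m$ by the degree bound from Theorem~\ref{Theorem fundamental prop. (m,p)}, and the exact binomial-inversion identity $f_x(n)=\sum_{k=0}^{n}\binom{n}{k}(\Delta^k f_x)(0)$ together give precisely the claimed formula, and your handling of the two ranges of summation via $n^{(k)}=0$ for $k>n$ is sound. The paper itself gives no proof --- it imports the statement from \cite[Proposition 3.2.(i)]{HoMa} and merely describes it as the Newton form of the interpolating polynomial --- and your argument is exactly the standard derivation that description points to.
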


Regarding $(m,\infty)$-isometries, we will need the following two statements. Theorem \ref{Th. fundamental prop. and partition for (m,infty)} is a combination of several fundamental properties of $(m,\infty)$-isometric tuples. 

\begin{theorem}[{\cite[Corollary 5.1]{HoMa}}]\label{Prop. norm(T^alpha x) is bounded}
Let $T=(T_1,...,T_d) \in B(X)^d$ be an\\ 
$(m,\infty)$-isometry. Then $(\norm{T^{\alpha}x})_{\alpha \in \N^d}$ is bounded, for all $x \in X$, and
\begin{equation*}
\max_{\alpha \in \N^d} \norm{T^{\alpha}x} = \max_{|\alpha|= 0,...,m-1} \norm{T^{\alpha}x},
\end{equation*}
for all $x \in X$.
\end{theorem}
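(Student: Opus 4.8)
The plan is to reduce everything to the behaviour of the ``shell maxima''
\[
\sigma_k(x) := \max_{|\alpha| = k} \norm{T^{\alpha} x}, \qquad k \in \N,
\]
and to establish a recursion bounding each $\sigma_n(x)$ by the $m$ preceding shells. First I would observe that, by the very definition of these quantities, the $(m,\infty)$-condition \eqref{def. (mu,infty)-tuples} reads exactly
\[
\max_{\substack{0 \le k \le m \\ k \ \mathrm{even}}} \sigma_k(x) = \max_{\substack{0 \le k \le m \\ k \ \mathrm{odd}}} \sigma_k(x), \qquad \forall x \in X.
\]
Now the top shell $\sigma_m(x)$ appears on exactly one side of this equality (the even side if $m$ is even, the odd side if $m$ is odd), while the other side involves only shells of degree $\le m-1$. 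Since $\sigma_m(x)$ is bounded above by the side on which it sits, the equality forces the domination
\[
\sigma_m(x) \le \max_{0 \le k \le m-1} \sigma_k(x), \qquad \forall x \in X.
\]

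The key step is then a bootstrap via a shift argument: because the displayed domination holds for \emph{every} vector, it holds in particular with $T^{\beta} x$ in place of $x$, for any multi-index $\beta$. I would fix $n \ge m$ and an arbitrary $\gamma$ with $|\gamma| = n$, and decompose $\gamma = \alpha + \beta$ with $|\alpha| = m$, $\beta \le \gamma$ componentwise and $|\beta| = n-m$; such a splitting exists precisely because $|\gamma| = n \ge m$. Chaining the elementary inequalities $\norm{T^{\gamma} x} \le \sigma_m(T^{\beta} x)$, the domination applied to $T^{\beta} x$, and $\sigma_k(T^{\beta} x) \le \sigma_{k+|\beta|}(x)$ (all three using commutativity), one arrives at $\norm{T^{\gamma} x} \le \max_{n-m \le j \le n-1} \sigma_j(x)$. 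Taking the maximum over all $\gamma$ of degree $n$ yields the recursion
\[
\sigma_n(x) \le \max_{n-m \le j \le n-1} \sigma_j(x), \qquad \forall n \ge m, \ \forall x \in X.
\]
A straightforward finite induction on $n$ (the base cases $0 \le n \le m-1$ being trivial) then gives $\sigma_n(x) \le \max_{0 \le j \le m-1} \sigma_j(x)$ for every $n$, which is exactly the asserted identity $\max_{\alpha \in \N^d} \norm{T^{\alpha} x} = \max_{|\alpha| = 0,\ldots,m-1} \norm{T^{\alpha} x}$, and in particular gives the boundedness of $(\norm{T^{\alpha} x})_{\alpha \in \N^d}$.

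The main obstacle I anticipate is precisely the point where the single-operator argument fails in the tuple setting: one cannot simply ``shift the index'', because $\sigma_k(T^{\beta} x)$ is only a \emph{restricted} maximum, taken over those degree-$(k+|\beta|)$ multi-indices that dominate $\beta$, rather than the full shell maximum $\sigma_{k+|\beta|}(x)$. The device that circumvents this is to start from an \emph{arbitrary} top-degree multi-index $\gamma$ and to choose the splitting $\gamma = \alpha + \beta$ adapted to it, so that the inequality $\sigma_k(T^{\beta} x) \le \sigma_{k+|\beta|}(x)$ is only ever used in the harmless direction. Once this bookkeeping is set up correctly, the remaining estimates are routine.
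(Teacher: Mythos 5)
Your proposal is correct, but note first that this statement is one of the quoted preliminaries: the present paper gives no proof of it at all, importing it verbatim as Corollary 5.1 of \cite{HoMa}, so there is no argument in this document to compare against. Judged on its own, your derivation holds up at every step: the even/odd equality does force $\sigma_m(x) \le \max_{0\le k\le m-1}\sigma_k(x)$ because $\sigma_m$ sits on exactly one side while the other side involves only shells of degree $\le m-1$; applying this domination to $T^{\beta}x$ and choosing the splitting $\gamma=\alpha+\beta$ \emph{adapted to the given top-degree multi-index} correctly sidesteps the fact that $\sigma_k(T^{\beta}x)$ is only a restricted shell maximum (the inequality $\sigma_k(T^{\beta}x)\le\sigma_{k+|\beta|}(x)$ is indeed only ever used in the harmless direction); and the resulting recursion $\sigma_n(x)\le\max_{n-m\le j\le n-1}\sigma_j(x)$ closes by strong induction since $n-m\ge 0$. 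This is essentially the mechanism behind the proof in the cited source, which likewise first shows that the degree-$m$ shell is dominated by the lower shells and then propagates that bound to all higher shells; your write-up is a clean, self-contained reconstruction of it.
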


\begin{theorem}[{\cite[Proposition 5.5, Theorem 5.1 and Remark 5.2]{HoMa}}]\label{Th. fundamental prop. and partition for (m,infty)}
Let \\ 
$T=(T_1,...,T_d) \in B(X)^d$ be an $(m,\infty)$-isometric tuple. Define the norm
$|.|_{\infty}: X \to [0,\infty)$ via $|x|_{\infty}:= \max_{\alpha \in \N^d} \norm{T^{\alpha}x}$, for all 
$x \in X$, and denote
\begin{equation*}
X_{j, |.|_\infty} := \{ x \in X \ | \ |x|_\infty = |T_j^n x|_\infty \ \textrm{for all} \ n \in \N \}.
\end{equation*}
Then 
\begin{equation*}
X = \bigcup_{j=1,...,d} X_{j,|.|_\infty}.
\end{equation*}
\end{theorem}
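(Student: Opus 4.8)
The plan is to reduce the partition statement to a combinatorial fact about the set on which the quantity $|x|_\infty$ is attained. First I would check that $|\cdot|_\infty$ really is a norm: by Theorem~\ref{Prop. norm(T^alpha x) is bounded} the family $(\norm{T^\alpha x})_\alpha$ is bounded and the defining supremum is attained already on the finite set $\{|\alpha|\le m-1\}$, so $|x|_\infty$ is finite; homogeneity and subadditivity are inherited from $\norm{\cdot}$, and definiteness follows from $|x|_\infty\ge\norm{T^0 x}=\norm{x}$. Fixing $x\in X$, write $M:=|x|_\infty$ and introduce the attainment set $S:=\{\alpha\in\N^d : \norm{T^\alpha x}=M\}$. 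Since the operators commute, $|T_j^n x|_\infty=\max_\alpha \norm{T^{\alpha+ne_j}x}=\max_{\beta_j\ge n}\norm{T^\beta x}$, where $e_j$ is the $j$-th unit multi-index, and this maximum is again attained by Theorem~\ref{Prop. norm(T^alpha x) is bounded}. Hence $x\in X_{j,|\cdot|_\infty}$, i.e.\ $|T_j^n x|_\infty=M$ for all $n$, holds precisely when for every $n$ there is some $\beta\in S$ with $\beta_j\ge n$; that is, exactly when $\sup_{\alpha\in S}\alpha_j=\infty$. The theorem is therefore equivalent to the assertion that, for each $x$, at least one coordinate is unbounded over $S$.

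The heart of the argument is to exploit the defining identity \eqref{def. (mu,infty)-tuples} for the vector $y=T^\alpha x$ with $\alpha\in S$ arbitrary. Then $T^\gamma y=T^{\alpha+\gamma}x$, and the even side of \eqref{def. (mu,infty)-tuples} contains the term $\gamma=0$ of value $\norm{T^\alpha x}=M$; since $M$ is the global maximum of $(\norm{T^\beta x})_\beta$, the even maximum equals $M$, and the identity forces the odd maximum to equal $M$ as well. This produces a multi-index $\gamma$ with $1\le|\gamma|\le m$ such that $\alpha+\gamma\in S$, so from any point of $S$ one reaches a point of $S$ of strictly larger total degree. Iterating shows $\sup_{\alpha\in S}|\alpha|=\infty$, and because $|\alpha|=\sum_{j=1}^d\alpha_j$ is a sum of only $d$ coordinates, some coordinate $j$ must satisfy $\sup_{\alpha\in S}\alpha_j=\infty$. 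By the reduction of the first paragraph this gives $x\in X_{j,|\cdot|_\infty}$, and since $x$ was arbitrary we conclude $X=\bigcup_{j=1}^d X_{j,|\cdot|_\infty}$.

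I expect the main obstacle to be the careful handling of the two maxima in \eqref{def. (mu,infty)-tuples}: one must argue precisely that the inclusion of the index $\gamma=0$ pins the even maximum to the global value $M$, and that the resulting odd index strictly increases the total degree while staying inside $S$. A secondary point requiring care is the equivalence ``$x\in X_{j,|\cdot|_\infty}\iff\sup_{\alpha\in S}\alpha_j=\infty$'', where the passage from a maximum over the infinite index set $\{\beta_j\ge n\}$ to an actual element of $S$ relies essentially on the attainment furnished by Theorem~\ref{Prop. norm(T^alpha x) is bounded}.
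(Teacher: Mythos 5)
Your argument is correct. Note, however, that this theorem is not proved in the paper at all: it is imported verbatim from \cite{HoMa} (Proposition 5.5, Theorem 5.1 and Remark 5.2), so there is no in-paper proof to compare against. Your proof is a valid self-contained reconstruction, and its two pillars --- (a) attainment of the maximum on the finite index set $\{|\alpha|\le m-1\}$ via Theorem~\ref{Prop. norm(T^alpha x) is bounded}, which turns membership in $X_{j,|.|_\infty}$ into unboundedness of the $j$-th coordinate on the attainment set $S$, and (b) applying the even/odd identity \eqref{def. (mu,infty)-tuples} to $y=T^\alpha x$ with $\alpha\in S$, where the term $\gamma=0$ pins the even maximum at $|x|_\infty$ and forces an odd $\gamma$ with $\alpha+\gamma\in S$ --- together with the final pigeonhole step are all sound (the odd index set is nonempty precisely because the definition of an $(m,\infty)$-isometry requires $m\ge 1$). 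This is essentially the same mechanism as in the cited source, which factors the argument through the intermediate statement that $T$ is a $(1,\infty)$-isometry for the norm $|.|_\infty$; your version just runs the propagation argument directly on $S$.
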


(Note that, by Theorem \ref{Prop. norm(T^alpha x) is bounded}, $|.|_\infty = \norm{.}$ if $m=1$.)

We will also require a fundamental fact on tuples which are both $(m,p)$- and $(\mu,\infty)$-isometric and an (almost) immediate corollary.

\begin{lemma}[{\cite[Lemma 7.2]{HoMa}}]\label{Lemma T_i^mT_j^m=0}
Let $T =(T_1,...,T_d) \in B(X)^d$ be an\\ 
$(m,p)$-isometry as well as a $(\mu, \infty)$-isometry. Let 
$\gamma=(\gamma_1,...,\gamma_d) \in \N^d$ be a multi-index with the property that
$|\gamma'_j| \geq m$ for every $j \in \{1,...,d\}$. Then $T^{\gamma}=0.$ 
\end{lemma}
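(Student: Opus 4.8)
The plan is to fix $x \in X$, set $y := T^{\gamma}x$, and show $y = 0$. I would first extract from the two hypotheses the two facts that drive everything: from the $(m,p)$-isometry assumption and Theorem~\ref{Theorem fundamental prop. (m,p)}, the sequence $Q^{n}(x)$ agrees with a polynomial in $n$ of degree $\le m-1$, so there is a constant $C_x$ with $Q^{n}(x) \le C_x\, n^{m-1}$ for all large $n$; from the $(\mu,\infty)$-isometry assumption I keep the refined boundedness of Theorem~\ref{Prop. norm(T^alpha x) is bounded} and the covering of Theorem~\ref{Th. fundamental prop. and partition for (m,infty)} in reserve for the final step. Since all summands defining $Q^{n}(x)$ are nonnegative, each individual term is dominated by the whole sum, which is the mechanism I will exploit.

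The heart of the argument is a careful choice of multi-index that converts the polynomial growth of $Q^{n}$ into decay of a single orbit. For a fixed coordinate $j$ and $n \ge |\gamma|$, let $e_j$ be the multi-index with a $1$ in position $j$ and $0$ elsewhere, and put $\alpha := \gamma + (n-|\gamma|)e_j$, so that $|\alpha| = n$ and $T^{\alpha} = T_j^{\,n-|\gamma|}T^{\gamma}$ by commutativity. Picking out this one term from $Q^{n}(x)$ and computing its multinomial coefficient as a Pochhammer symbol,
\[
\frac{n!}{\alpha!} = \frac{n!}{(n-|\gamma'_j|)!\,\prod_{i\neq j}\gamma_i!} = \frac{n^{(|\gamma'_j|)}}{\prod_{i\neq j}\gamma_i!},
\]
I obtain $\norm{T_j^{\,n-|\gamma|}y}^{p} \le \big(\prod_{i\ne j}\gamma_i!\big)\,(n^{(|\gamma'_j|)})^{-1}\, Q^{n}(x) \le C'_x\, n^{\,m-1-|\gamma'_j|}$. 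Here the hypothesis $|\gamma'_j| \ge m$ is exactly what forces the exponent to be $\le -1$, so $T_j^{\,k}y \to 0$ as $k \to \infty$. Since this works for every coordinate $j$ (the hypothesis holds for all $j$), I conclude $T_j^{\,k}y \to 0$ for each $j = 1,\dots,d$.

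It remains to upgrade this decay to $y = 0$, and this is where the $(\mu,\infty)$-structure is indispensable. By Theorem~\ref{Th. fundamental prop. and partition for (m,infty)} there is some $j_0$ with $y \in X_{j_0,|.|_\infty}$, so $|T_{j_0}^{\,n}y|_\infty = |y|_\infty$ for all $n$. On the other hand, Theorem~\ref{Prop. norm(T^alpha x) is bounded} applied to $T_{j_0}^{\,n}y$ reduces $|T_{j_0}^{\,n}y|_\infty = \max_{\alpha}\norm{T^{\alpha}T_{j_0}^{\,n}y}$ to a maximum over the finite set $|\alpha| \le \mu-1$; for each such $\alpha$, boundedness of the operators together with $T_{j_0}^{\,n}y \to 0$ gives $\norm{T^{\alpha}T_{j_0}^{\,n}y} \to 0$, hence $|T_{j_0}^{\,n}y|_\infty \to 0$. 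Combining the two, $|y|_\infty = 0$, and since $|y|_\infty \ge \norm{y}$ this yields $y = T^{\gamma}x = 0$; as $x$ was arbitrary, $T^{\gamma}=0$.

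The step I expect to be the main obstacle is precisely the passage from convergence to exact vanishing: the $(m,p)$-isometry hypothesis, by itself, only delivers the asymptotic estimate $T_j^{\,k}y \to 0$ and can never produce an exact zero, so the proof genuinely needs the rigidity of the infinity-isometry — namely that the auxiliary norm $|.|_\infty$ is not merely bounded but exactly preserved along the $T_{j_0}$-orbit of $y$, together with the reduction of $|.|_\infty$ to a finite maximum. Identifying the concentrated multi-index $\gamma + (n-|\gamma|)e_j$ as the right test term, so that its multinomial coefficient grows only polynomially at the rate $n^{|\gamma'_j|}$ rather than exponentially, is the other point requiring care, since a less economical choice would drown the estimate in an exponential factor.
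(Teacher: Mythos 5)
Your argument is correct. Note first that the paper itself offers no proof of this lemma --- it is imported verbatim as \cite[Lemma 7.2]{HoMa} --- so there is no in-paper proof to compare against; judged on its own, your two-stage proof is sound and uses only tools already quoted in the preliminaries. The growth comparison in the first stage is the right one: isolating the single nonnegative term indexed by $\gamma+(n-|\gamma|)e_j$ in $Q^n(x)$ gives $n^{(|\gamma'_j|)}\norm{T_j^{n-|\gamma|}T^\gamma x}^p \le \bigl(\prod_{i\neq j}\gamma_i!\bigr)Q^n(x) = O(n^{m-1})$ by Theorem~\ref{Theorem fundamental prop. (m,p)}, and $|\gamma'_j|\ge m$ makes the quotient tend to $0$, so $T_j^k T^\gamma x \to 0$ for every $j$. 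The upgrade from decay to exact vanishing is also handled correctly: the partition of Theorem~\ref{Th. fundamental prop. and partition for (m,infty)} places $y=T^\gamma x$ in some $X_{j_0,|.|_\infty}$, the reduction of $|\cdot|_\infty$ to a maximum over $|\alpha|\le\mu-1$ in Theorem~\ref{Prop. norm(T^alpha x) is bounded} together with boundedness of the $T_i$ forces $|T_{j_0}^n y|_\infty\to 0$, and the orbit-invariance of $|\cdot|_\infty$ then gives $\norm{y}\le|y|_\infty=0$. You correctly need the hypothesis for \emph{all} $j$, since the index $j_0$ supplied by the partition is not under your control. The only (harmless) degenerate points are $m=0$, where $X=\{0\}$ and the claim is vacuous, and the restriction to $n\ge|\gamma|$ so that the chosen multi-index is admissible and $n^{(|\gamma'_j|)}>0$; both are implicit in your write-up.
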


Conversely, this implies that if an operator $T^\alpha$ is not the zero-operator, the multi-index $\alpha$ has to be of a specific form. The proof in \cite{HoMa} of the following corollary appears to be overly complicated, the statement is just the negation of the previous lemma.

\begin{corollary}[{\cite[Corollary 7.1]{HoMa}}]\label{Cor T_i^mT_j^m=0}
Let $T =(T_1,...,T_d) \in B(X)^d$ be an 
$(m,p)$-isometry for some $m \geq 1$ as well as a $(\mu, \infty)$-isometry. 
If $\alpha \in \N^d$ is a multi-index with $T^{\alpha} \neq 0$ and $|\alpha|=n$, then there exists some $j_0 \in \{1,...,d\}$ with
$T^{\alpha}=T_{j_0}^{n-|\alpha'_{j_0}|}(T'_{j_0})^{\alpha'_{j_0}}$ and $|\alpha'_{j_0}|\leq m-1$.
\end{corollary}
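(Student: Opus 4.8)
The plan is to obtain the statement directly as the contrapositive of Lemma~\ref{Lemma T_i^mT_j^m=0}, followed by a routine rewriting of $T^\alpha$ using the commutativity of the tuple. Indeed, as remarked just before the statement, the whole mathematical content is the negation of the preceding lemma, so the proof should be short.

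First I would establish the existence of the index $j_0$ by contraposition. Fix a multi-index $\alpha$ with $|\alpha|=n$ and suppose, contrary to the desired conclusion, that no $j \in \{1,\ldots,d\}$ satisfies $|\alpha'_j| \leq m-1$; equivalently, that $|\alpha'_j| \geq m$ for every $j \in \{1,\ldots,d\}$. Then $\alpha$ satisfies exactly the hypothesis of Lemma~\ref{Lemma T_i^mT_j^m=0}, so $T^\alpha = 0$. Contraposing, the assumption $T^\alpha \neq 0$ forces the existence of at least one index $j_0$ with $|\alpha'_{j_0}| \leq m-1$. This is the only place the hypothesis $m \geq 1$ is used: it guarantees $m-1 \geq 0$, so that the bound is compatible with $|\alpha'_{j_0}| \geq 0$ and the conclusion is not vacuously unsatisfiable.

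It then remains to put $T^\alpha$ into the asserted factored form, which is pure bookkeeping. Writing $\alpha = (\alpha_1,\ldots,\alpha_d)$ and using $|\alpha| = |\alpha'_{j_0}| + \alpha_{j_0} = n$, we read off $\alpha_{j_0} = n - |\alpha'_{j_0}|$. Since the operators $T_1,\ldots,T_d$ commute, the product $T^\alpha = T_1^{\alpha_1}\cdots T_d^{\alpha_d}$ may be reordered to separate out the $j_0$-th factor, yielding
\begin{equation*}
T^\alpha = T_{j_0}^{\alpha_{j_0}}(T'_{j_0})^{\alpha'_{j_0}} = T_{j_0}^{\,n-|\alpha'_{j_0}|}(T'_{j_0})^{\alpha'_{j_0}},
\end{equation*}
which together with $|\alpha'_{j_0}| \leq m-1$ is precisely the claim.

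I do not expect any genuine obstacle. The substantive step, the existence of $j_0$, is an immediate contraposition of Lemma~\ref{Lemma T_i^mT_j^m=0}, and the factorisation is enabled entirely by commutativity of the tuple. The only point deserving a moment's care is the remark on the hypothesis $m \geq 1$; beyond that there is nothing to grind through.
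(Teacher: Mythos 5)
Your proof is correct and matches the paper's own treatment exactly: the paper explicitly remarks that the statement is just the negation of Lemma~\ref{Lemma T_i^mT_j^m=0}, and your contraposition plus the commutativity bookkeeping is precisely that argument spelled out.
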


This fact has consequences for the appearance of elements of the sequences\\ 
$\left(Q^n(x)\right)_{n \in \N}$, since several summands become zero for large enough $n$. That is, we have trivially by definition \eqref{def. of Q^(n,p)} of $\left(Q^n(x)\right)_{n\in \N}$:

\begin{corollary}[{see \cite[proof of Theorem 7.1]{HoMa}}]\label{Cor. structure of Q^n if (m,p) and (mu,infty)}
Let $T =(T_1,...,T_d) \in B(X)^d$\\ 
be an $(m,p)$-isometry for some $m \geq 1$ as well as a $(\mu, \infty)$-isometry. Then, for all $n \in \N$ with $n \geq 2m-1$, we have
\begin{equation*}
 Q^n(x)= \sum_{\substack{\beta \in \N^{d-1} \\ |\beta|=0,...,m-1}} \sum_{j=1}^d 
	\frac{n!}{(n-|\beta|)!\beta!} \norm{T_j^{n-|\beta|}(T'_j)^{\beta}x}^p,
 	\ \ \forall x \in X,
\end{equation*}
where $\frac{n!}{(n-|\beta|)!\beta!}= \frac{n^{(|\beta|)}}{\beta!}$. (We set $n \geq 2m-1$ to ensure that every multi-index only appears once.)
\end{corollary}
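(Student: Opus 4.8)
The plan is to start from the definition \eqref{def. of Q^(n,p)} of $Q^n(x)$ and simply discard the vanishing summands. I would partition the index set $\{\alpha \in \N^d : |\alpha| = n\}$ into those multi-indices $\alpha$ for which $|\alpha'_j| \geq m$ holds for \emph{every} $j \in \{1,\dots,d\}$, and those for which $|\alpha'_{j_0}| \leq m-1$ for \emph{at least one} index $j_0$. For the former class, Lemma \ref{Lemma T_i^mT_j^m=0}, applied with $\gamma = \alpha$, yields $T^\alpha = 0$, so these summands contribute nothing. Hence only multi-indices admitting some $j_0$ with $|\alpha'_{j_0}| \leq m-1$ survive, and for each such $\alpha$ I would set $\beta := \alpha'_{j_0} \in \N^{d-1}$ and $\alpha_{j_0} = n - |\beta|$, so that the summand becomes $\frac{n!}{(n-|\beta|)!\,\beta!}\norm{T_{j_0}^{n-|\beta|}(T'_{j_0})^{\beta}x}^p$, using $\alpha! = (n-|\beta|)!\,\beta!$.

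The only genuinely nontrivial point, and the one for which the hypothesis $n \geq 2m-1$ is needed, is to check that the double sum on the right counts each surviving summand \emph{exactly once}. I would argue that for such $n$ no multi-index $\alpha$ with $|\alpha| = n$ can admit two distinct indices $j_0 \neq j_1$ with $|\alpha'_{j_0}| \leq m-1$ and $|\alpha'_{j_1}| \leq m-1$ simultaneously: these inequalities force $\alpha_{j_0} \geq n-m+1$ and $\alpha_{j_1} \geq n-m+1$, whence $n = |\alpha| \geq \alpha_{j_0} + \alpha_{j_1} \geq 2(n-m+1)$, i.e. $n \leq 2m-2$, contradicting $n \geq 2m-1$. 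Consequently, for $n \geq 2m-1$ the index $j_0$, and therefore the pair $(j_0,\beta)$, is uniquely determined by $\alpha$, so rewriting the single sum over $\alpha$ as the double sum over $j$ and $\beta$ introduces no repetitions. This uniqueness step is where I expect the main (modest) obstacle to lie; everything else is bookkeeping.

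Conversely, every pair $(j,\beta)$ with $|\beta| \leq m-1$ determines a legitimate multi-index of weight $n$, since $n - |\beta| \geq 0$ follows from $|\beta| \leq m-1 \leq n$; those pairs with $T^\alpha \neq 0$ are precisely the ones described by Corollary \ref{Cor T_i^mT_j^m=0}, while pairs producing $T^\alpha = 0$ merely add vanishing terms and may be freely retained in the double sum. Combining these observations gives the claimed identity, and the coefficient identity $\frac{n!}{(n-|\beta|)!\,\beta!} = \frac{n^{(|\beta|)}}{\beta!}$ is immediate from the definition of the Pochhammer symbol. In this way the corollary reduces to the definition of $Q^n(x)$ together with Lemma \ref{Lemma T_i^mT_j^m=0}, which is consistent with the statement being, as advertised, essentially a direct consequence of these.
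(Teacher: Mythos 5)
Your proposal is correct and matches the paper's (essentially unstated) argument: the paper treats the identity as immediate from the definition of $Q^n(x)$ together with Lemma \ref{Lemma T_i^mT_j^m=0}, with the parenthetical remark that $n \geq 2m-1$ ensures each multi-index appears only once --- which is exactly the uniqueness count $n \geq 2(n-m+1)$ you carry out explicitly. Your write-up simply makes precise what the paper leaves as ``trivial by definition.''
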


\section{The main result}

We first present the main result of this article, which is a generalisation of \cite[Proposition 7.3]{HoMa}, before stating a preliminary lemma needed for its proof.

\begin{theorem}\label{main theorem of paper}
Let $T=(T_1,...,T_d) \in B(X)^d$ be an $(m,p)$-isometric as well a $(\mu,\infty)$-isometric tuple. Then 
\begin{itemize}
\item[(i)] the sequences $n \mapsto \norm{T_j^nx}$ become constant for $n \geq m$, for all $j \in \{1,...,d\}$, for all $x \in X$.
\item[(ii)] the tuple $(T_1^m,...,T_d^m)$ is a $(1,p)$-isometry, that is
\[\sum_{j=1}^d \norm{T_j^{m}x}^p = \norm{x}^p, \ \ \ \forall x \in X.\]
\item[(iii)] for any $(n_1,...,n_d) \in \N^d$ with $n_j \geq m$ for all $j$, the operators 
$\sum_{j=1}^dT_j^{n_j}$ are isometries, that is
\[ \left \|\sum_{j=1}^d T_j^{n_j} x \right \| = \norm{x}, \ \ \ \forall x \in X.\]
\end{itemize}
\end{theorem}

Of course, (i) and (ii) imply that, for any $(n_1,...,n_d) \in \N^d$ with $n_j \geq m$ for all $j$, 
\begin{equation*}
\sum_{j=1}^d \norm{T_j^{n_j}x}^p = \norm{x}^p, \ \ \ \forall x \in X,
\end{equation*}

Theorem \ref{main theorem of paper} is a consequence of the following lemma, which is a weaker version of \ref{main theorem of paper}.(i).

\begin{lemma}\label{lemma for main theorem}
Let $T=(T_1,...,T_d) \in B(X)^d$ be an $(m,p)$-isometric as well as a $(\mu,\infty)$-isometric tuple. Let 
further $\kappa \in \N^{d-1}$ be a multi-index with $|\kappa| \geq 1$. Then the mappings
\begin{align*}
	n \mapsto \norm{T^n_{j}\left(T_{j}'\right)^{\kappa}x}
\end{align*}
become constant for $n \geq m$, for all $j \in \{1,...,d\}$, for all $x \in X$.
\end{lemma}
\begin{proof}
If $m=0$, then $X=\{0\}$ and if $m=1$, the statement holds trivially, since $T_jT_i=0$ for all $i \neq j$ by Lemma \ref{Lemma T_i^mT_j^m=0}. So assume $m \geq 2$. Further, it clearly suffices to consider $|\kappa| = 1$, since the statement then holds for all $x \in X$. The proof, however, works by proving the theorem for $|\kappa| \in \{1,...,m-1\}$ in descending order. (Note that the case $|\kappa| \geq m$ is also trivial, again by Lemma \ref{Lemma T_i^mT_j^m=0}.)

Now fix an arbitrary $j_0 \in \{1,...,d\}$, let $\kappa \in \N^{d-1}$ with $|\kappa| \in \{1,...,m-1\}$ and set $\ell := m - |\kappa|$. Then $\ell \in \{1,...,m-1\}$ and $|\kappa| = m-\ell$. We apply Lemma \ref{Lemma T_i^mT_j^m=0} to $Q^k(T^m_{j_0}\left(T_{j_0}'\right)^{\kappa}x)$.

By definition 
\eqref{def. of Q^(n,p)},
\begin{align}\label{sum up to ell-1} \notag
&Q^k(T^m_{j_0}\left(T_{j_0}'\right)^{\kappa}x)=\sum_{|\alpha|=k}\frac{k!}{\alpha!}\norm{T^{\alpha}\left(T^m_{j_0}\left(T_{j_0}'\right)^{\kappa}x\right)}^p \\ \notag
&= \norm{T_{j_0}^{k}\left(T^m_{j_0}\left(T_{j_0}'\right)^{\kappa}x\right)}^p + 
\sum_{j=1}^k \sum_{\substack{\beta \in \N^{d-1} \\ |\beta| = j}}
\frac{k!}{(k-j)!\beta!}\norm{T_{j_0}^{k-j}\left(T'_{j_0}\right)^{\beta}\left(T^m_{j_0}\left(T_{j_0}'\right)^{\kappa}x\right)}^p \\ \notag
&\overset{\ref{Lemma T_i^mT_j^m=0}}{=} \norm{T_{j_0}^{m+k}\left(T_{j_0}'\right)^{\kappa}x}^p + 
\sum_{j=1}^{\min\{k,\ell-1\}} \sum_{\substack{\beta \in \N^{d-1} \\ |\beta| = j}}
\frac{k!}{(k-j)!\beta!}\norm{T_{j_0}^{m+k-j}\left(T_{j_0}'\right)^{\kappa+\beta}x}^p \\
&= \norm{T_{j_0}^{m+k}\left(T_{j_0}'\right)^{\kappa}x}^p + 
\sum_{j=1}^{\ell-1} k^{(j)}  \sum_{\substack{\beta \in \N^{d-1} \\ |\beta| = j}}
\frac{1}{\beta!}\norm{T_{j_0}^{m+k-j}\left(T_{j_0}'\right)^{\kappa+\beta}x}^p,
\end{align} 
for all $k \in \N$, for all $x \in X$. Here, in the last line, we utilise the fact that $k^{(j)}=0$ if $j>k$.

We now prove our statement by (finite) induction on $\ell$.\\ 

\underline{$\ell=1$:}

For $\ell =1$ and $|\kappa| = m-1$, we have, by \eqref{sum up to ell-1},
\begin{align*}
Q^k\left(T^m_{j_0}\left(T_{j_0}'\right)^{\kappa}x\right)
=\norm{T^{m+k}_{j_0}\left(T_{j_0}'\right)^{\kappa}x}^p, \ \ \forall k \in \N, \ \forall x \in X.
\end{align*}
Since we know by Theorem \ref{Theorem fundamental prop. (m,p)} that the sequences $k \mapsto Q^k\left(T^m_{j_0}\left(T_{j_0}'\right)^{\kappa}x\right)$ are polynomial for all $x \in X$, and by Theorem \ref{Prop. norm(T^alpha x) is bounded} that the $k \mapsto \norm{T^{m+k}_{j_0}\left(T_{j_0}'\right)^{\kappa}x}^p$ are bounded for all $x \in X$, it follows that
\begin{align*}
	n \mapsto \norm{T^n_{j_0}\left(T_{j_0}'\right)^{\kappa}x}
\end{align*}
become constant for $n \geq m$, for all $x \in X$.

Since $\ell \in \{1,...,m-1\}$, if we have $m=2$, we are already done. So assume in the following that $m \geq 3$. \\

\underline{$\ell \rightarrow \ell+1$:}

Assume that the statement holds for some $\ell \in \{1,...,m-2\}$. That is, for all $\kappa \in \N^{d-1}$ with
$|\kappa| = m - \ell$ the sequences
\begin{align*}
	n \mapsto \norm{T^n_{j_0}\left(T_{j_0}'\right)^{\kappa}x}
\end{align*}
become constant for $n \geq m$, for all $x \in X$.

Now take a multi-index $\tilde{\kappa} \in \N^{d-1}$ with $|\tilde{\kappa}| = m- (\ell+1)$ and consider
\begin{align*}
&Q^k(T^m_{j_0}\left(T_{j_0}'\right)^{\tilde{\kappa}}x)= \norm{T_{j_0}^{m+k}\left(T_{j_0}'\right)^{\tilde{\kappa}}x}^p + 
\sum_{j=1}^{\ell} k^{(j)}  \sum_{\substack{\beta \in \N^{d-1} \\ |\beta| = j}}
\frac{1}{\beta!}\norm{T_{j_0}^{m+k-j}\left(T_{j_0}'\right)^{\tilde{\kappa}+\beta}x}^p. 
\end{align*}
(Where we are now summing over all $j$ running from $1$ to $(\ell + 1) - 1 = \ell$.) 

Since $|\beta| \geq 1$, we have $|\tilde{\kappa}+\beta| \geq m-\ell$. Hence, if $k \geq j$, by our induction assumption, 
\begin{align*}
	\norm{T_{j_0}^{m+k-j}\left(T_{j_0}'\right)^{\tilde{\kappa}+\beta}x}^p 
	= \norm{T_{j_0}^{m}\left(T_{j_0}'\right)^{\tilde{\kappa}+\beta}x}^p, \ \ \forall x \in X,
\end{align*}
since $n \mapsto \norm{T_{j_0}^{n}\left(T_{j_0}'\right)^{\tilde{\kappa}+\beta}x}$ become constant for $n \geq m$.

Hence, we have, for all $x \in X$,
\begin{align}\label{quasi-polynomial for k}
Q^k(T^m_{j_0}\left(T_{j_0}'\right)^{\tilde{\kappa}}x) 
= \norm{T_{j_0}^{m+k}\left(T_{j_0}'\right)^{\tilde{\kappa}}x}^p + 
\sum_{j=1}^{\ell} k^{(j)}  \sum_{\substack{\beta \in \N^{d-1} \\ |\beta| = j}}
\frac{1}{\beta!}\norm{T_{j_0}^{m}\left(T_{j_0}'\right)^{\tilde{\kappa}+\beta}x}^p.
\end{align}
That is, for all $x \in X$, the sequences $k \mapsto Q^k(T^m_{j_0}\left(T_{j_0}'\right)^{\tilde{\kappa}}x)$ become almost polynomial (of degree $\leq \ell$), with the term $\norm{T_{j_0}^{m+k}\left(T_{j_0}'\right)^{\tilde{\kappa}}x}^p$ instead of a (constant) trailing coefficient.

But, as before, by Theorem \ref{Theorem fundamental prop. (m,p)}, we know that for any $x \in X$, the sequences $k \mapsto Q^k(T_{j_0}^{m}\left(T_{j_0}'\right)^{\tilde{\kappa}}x)$ are indeed polynomial. Through Corollary \ref{Coro. Newtonform of (m,p) (i) and P_(m-1) (ii)} we know that their trailing coefficients are $\norm{T^m_{j_0}\left(T_{j_0}'\right)^{\tilde{\kappa}}x}^p$. 
Since, by Theorem \ref{Prop. norm(T^alpha x) is bounded}, for each $x \in X$, the sequences $k \mapsto \norm{T_{j_0}^{m+k}\left(T_{j_0}'\right)^{\tilde{\kappa}}x}^p$ are bounded, we can successively compare and remove coefficients of the formulae for $Q_k(T^m_{j_0}\left(T_{j_0}'\right)^{\tilde{\kappa}}x)$ as given through Corollary \ref{Coro. Newtonform of (m,p) (i) and P_(m-1) (ii)} and \eqref{quasi-polynomial for k}, until we eventually obtain that
\[ \norm{T_{j_0}^{m+k}\left(T_{j_0}'\right)^{\tilde{\kappa}}x}^p = \norm{T^m_{j_0}\left(T_{j_0}'\right)^{\tilde{\kappa}}x}^p, \]
for all $k \in \N$, for all $x \in X$. 
That is, the sequences
\begin{align*}
	 n \mapsto \norm{T^n_{j_0}\left(T_{j_0}'\right)^{\tilde{\kappa}}x}
\end{align*}
become constant for $n \geq m$, for all $x \in X$. This concludes the induction step and the proof. 
\end{proof}

We can now prove the main result.

\begin{proof}[Proof of Theorem \ref{main theorem of paper}]
By Corollary \ref{Cor. structure of Q^n if (m,p) and (mu,infty)} and the lemma above, we have for $n \geq 2m-1$, 
\begin{align}\label{quasi-polynomial n}
	Q^n(x)=&\sum_{\substack{\beta \in \N^{d-1} \\ |\beta|=0,...,m-1}} n^{(|\beta|)} \sum_{j=1}^d 
	\frac{1}{\beta!} \norm{T_j^{n-|\beta|}(T'_j)^{\beta}x}^p \notag \\
	=&\sum_{\substack{\beta \in \N^{d-1} \\ |\beta|=1,...,m-1}} n^{(|\beta|)} \sum_{j=1}^d 
	\frac{1}{\beta!} \norm{T_j^{m}(T'_j)^{\beta}x}^p + \sum_{j=1}^d \norm{T^n_jx}^p,
 	\ \ \forall x \in X.
\end{align}
That is, for all $x \in X$, for $n \geq 2m-1$, the sequences $n \mapsto Q^n(x)$ become almost polynomial (of degree $\leq m-1$), with the term $\sum_{j=1}^d \norm{T^n_jx}^p$ instead of a (constant) trailing coefficient.

Again, by Theorem \ref{Theorem fundamental prop. (m,p)}, we know that for any $x \in X$, the sequences $n \mapsto Q^n(x)$ are indeed polynomial. And since, by Theorem \ref{Prop. norm(T^alpha x) is bounded}, for each $x \in X$, the sequences $n \mapsto \sum_{j=1}^d \norm{T^n_jx}^p$ are bounded, we can again successively compare and remove coefficients of the formulae for $Q_n(x)$ as given in Corollary \ref{Coro. Newtonform of (m,p) (i) and P_(m-1) (ii)} and \eqref{quasi-polynomial n}, until we eventually obtain that
\begin{align}\label{equation (1,p) for n geq 2m-1}
	\sum_{j=1}^d \norm{T^n_jx}^p = \norm{x}^p, \ \ \forall x \in X, \ \forall \ n \geq 2m-1 \ .
\end{align}

Since $T_i^mT_j^m = 0$ for all $i \neq j$,  by Lemma \ref{Lemma T_i^mT_j^m=0}, replacing $x$ by $T^{\nu}_{j}x$ 
with $\nu \geq m$ in this last equation, 
gives $\norm{T^{\nu}_{j}x} = \norm{T_{j}^{n+\nu}x}$ for all $n \geq 2m-1$, for all $x \in X$.  
Hence, the sequences $n \mapsto \norm{T_j^{n}x}$ become constant for $n \geq m$, for all
$j \in \{1,...,d\}$, for all $x \in X$. This is \ref{main theorem of paper}.(i).

But then, \eqref{equation (1,p) for n geq 2m-1} becomes
\begin{align*}
	\sum_{j=1}^d \norm{T^{m}_jx}^p = \norm{x}^p, \ \ \forall x \in X\ .
\end{align*}
This is \ref{main theorem of paper}.(ii).

Now take any $(n_1,...,n_d) \in \N^d$ with $n_j \geq m$ for all $j$ and replace $x$ in the equation above by 
$\sum_{j=1}^dT_j^{n_j}$. Then, again, since $T^m_iT_j^m=0$ for $i \neq j$, and since $n \mapsto \norm{T_j^{n}x}$ become constant for $n \geq m$,
\begin{align*}
	\sum_{j=1}^d \norm{T_j^{m+n_j}x}^p 
	= \sum_{j=1}^d \norm{T_j^{m}x}^p = \big \|\sum_{j=1}^d T_j^{n_j} x \big \|^p, \ \ \forall x \in X. 
\end{align*} 
Together with \ref{main theorem of paper}.(i), this implies \ref{main theorem of paper}.(iii). 
\end{proof}

\begin{corollary}
If one of the operators $T_{j_0} \in \{T_1,...,T_d\}$ is surjective, then Theorem \ref{main theorem of paper}.(i) forces this operator to be an isometric isomorphism and by \ref{main theorem of paper}.(ii) the remaining operators are nilpotent. 

If one of the operators $T_{j_0} \in \{T_1,...,T_d\}$ is injective, by Lemma \ref{Lemma T_i^mT_j^m=0} and \ref{main theorem of paper}.(ii) we obtain that $T^m_{j_0}$ is an isometry and the remaining operators are nilpotent.
\end{corollary}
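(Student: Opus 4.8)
The plan is to treat the surjective and injective hypotheses separately, and in each case to first extract a norm statement about $T_{j_0}$ itself (or about $T_{j_0}^m$) from the power-$\geq m$ information supplied by Theorem \ref{main theorem of paper}, and then to substitute this back into the $(1,p)$-isometry identity \ref{main theorem of paper}.(ii) in order to annihilate the remaining operators.

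For the surjective case I would begin by observing that surjectivity of $T_{j_0}$ makes $T_{j_0}^m$ surjective as well. Given $x \in X$, pick $z \in X$ with $T_{j_0}^m z = x$; then Theorem \ref{main theorem of paper}.(i) applied to $z$ yields $\norm{T_{j_0} x} = \norm{T_{j_0}^{m+1} z} = \norm{T_{j_0}^m z} = \norm{x}$, since both exponents are at least $m$ and $T_{j_0} x = T_{j_0}^{m+1} z$. Hence $T_{j_0}$ is a norm-preserving surjection, that is, an isometric isomorphism (its inverse is again an isometry and therefore bounded). In particular $\norm{T_{j_0}^m x} = \norm{x}$ for all $x$, so Theorem \ref{main theorem of paper}.(ii) collapses to $\sum_{j \neq j_0} \norm{T_j^m x}^p = 0$, whence $T_j^m = 0$ for every $j \neq j_0$.

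For the injective case the key point is that injectivity of $T_{j_0}$ makes $T_{j_0}^m$ injective, being a composition of injective maps. The special instance $T_{j_0}^m T_j^m = 0$ (for $j \neq j_0$) of Lemma \ref{Lemma T_i^mT_j^m=0}, obtained by taking the multi-index $\gamma$ with $\gamma_{j_0} = \gamma_j = m$ and all other entries zero, then gives $T_{j_0}^m \bigl(T_j^m x\bigr) = 0$ for all $x \in X$; injectivity of $T_{j_0}^m$ forces $T_j^m = 0$ for every $j \neq j_0$. Feeding this into Theorem \ref{main theorem of paper}.(ii) leaves $\norm{T_{j_0}^m x}^p = \norm{x}^p$ for all $x$, so $T_{j_0}^m$ is an isometry.

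I expect the only genuinely delicate point to be the surjective case: Theorem \ref{main theorem of paper}.(i) delivers norm-constancy only for the powers $n \geq m$, and not for $T_{j_0}$ itself, so some extra input is needed to descend to the first power. It is exactly the surjectivity of $T_{j_0}^m$ that allows one to write an arbitrary $x$ as $T_{j_0}^m z$ and thereby upgrade the constancy to $\norm{T_{j_0} x} = \norm{x}$. In the injective case the analogous obstacle — proving $T_j^m = 0$ for $j \neq j_0$ — dissolves at once by combining the nilpotency relation of Lemma \ref{Lemma T_i^mT_j^m=0} with the injectivity of the power $T_{j_0}^m$, after which the remaining verifications are routine bookkeeping with the $(1,p)$-identity.
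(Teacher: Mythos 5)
Your proposal is correct and fills in the details in essentially the same way the paper intends (the paper folds the justification into the statement itself rather than giving a separate proof): surjectivity of $T_{j_0}^m$ lets you write $x=T_{j_0}^mz$ and descend the norm-constancy of Theorem \ref{main theorem of paper}.(i) to the first power, while in the injective case the relation $T_{j_0}^mT_j^m=0$ from Lemma \ref{Lemma T_i^mT_j^m=0} combined with injectivity of $T_{j_0}^m$ kills the other operators, and in both cases \ref{main theorem of paper}.(ii) finishes the argument.
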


However, with respect to the second part of this corollary, note that while, by definition of an $(m,p)$-isometry, we must have $\bigcap_{j=1}^d N(T_j) = \{0\}$, it is not clear that the kernel of a single operator has to be trivial.

\section{Some further remarks and the case $d=2$}

We finish this note with a stronger result for the case of a commuting pair $(T_1,T_2) \in B(X)^2$. We first state the following two easy corollaries of Theorem \ref{main theorem of paper} which hold for general $d$.

\begin{corollary}
Let $T=(T_1,...,T_d) \in B(X)^d$ be an $(m,p)$-isometry as well as a $(\mu,\infty)$-isometry. Then $T_j^m = 0$ or $\norm{T_j^m}=1$ for any 
$j\in \{1,...,d\}$.
\end{corollary}
\begin{proof}
By Theorem \ref{main theorem of paper}.(ii) we have $\norm{T_j^m} \leq 1$ for any $j$. On the other hand, by \ref{main theorem of paper}.(i) we have 
\begin{equation*}\norm{T_j^mx} = \norm{T_j^{m+1}x} \leq \norm{T_j^m} \cdot \norm{T_j^mx}, \ \ \forall x \in X,\end{equation*}
for any $j$. That is, $T_j^m = 0$ or $\norm{T_j^m} \geq 1$. 
\end{proof}

\begin{lemma}
Let $T=(T_1,...,T_d) \in B(X)^d$ be an $(m,p)$-isometry as well as a $(\mu,\infty)$-isometry. Define $|.|_{\infty}: X \to [0,\infty)$ and $X_{j, |.|_\infty}$ as in Theorem \ref{Th. fundamental prop. and partition for (m,infty)}. Then 
\begin{align*}
X_{j,|.|_{\infty}} = \{x \in X \ | \ &\exists \alpha(x) \in \N^{d}, \ \textrm{s.th.} \ |\alpha(x)| \leq \mu-1 \ \textrm{and} \\ 
&|x|_{\infty}=\norm{T^n_j\left(T'_j\right)^{\alpha'_j(x)}x}, \ \forall n \in \N \}.
\end{align*}
\end{lemma} 
\begin{proof}
By Theorem \ref{Prop. norm(T^alpha x) is bounded} we know that for every $x \in X$, there exists an $\alpha(x) \in \N^d$ with 
$\max_{\alpha \in \N^d}\norm{T^\alpha x} = \norm{T^{\alpha(x)}x}$ and $|\alpha(x)| \leq \mu-1$.

Then $x \in X_{j,|.|_\infty}$ if, and only if, for all $n \in \N$, there exists an $\alpha(x,n) \in \N^d$ with $|\alpha(x,n)| \leq \mu-1$
s.th. $|x|_{\infty}=\norm{T^n_jT^{\alpha(x,n)}x}$. Hence, the inclusion ``$\supset$" is clear.

To show ``$\subset$" let $0 \neq x \in X_{j,|.|_{\infty}}$. Then $T^m_j \neq 0$ and, hence, $\norm{T_j^m}=1$.

Since $|\alpha(x,n)| \leq \mu-1$ for all $n \in \N$, there are only finitely many choices for each $\alpha(x,n)$. Thus, there exists an 
$\alpha(x) \in \N^d$ and an infinite set $M(x) \subset \N$ s.th.
\begin{equation*}|x|_{\infty}=\norm{T^n_jT^{\alpha(x)}x}, \ \forall n \in M(x).\end{equation*}
By Theorem \ref{main theorem of paper}.(i), $M(x)$ contains all $n \geq m$ and further,
\begin{equation*}\norm{T^n_jT^{\alpha(x)}x} = \norm{T^n_j\left(T'_j\right)^{\alpha'_j(x)}x}, \ \ \textrm{for all} \ n \geq m.\end{equation*}
Since $\norm{T_j^m}=1$, the statement holds for all $n \in \N$.
\end{proof}

\begin{proposition}
Let $T=(T_1,T_d) \in B(X)^2$ be both an $(m,p)$-isometric and a $(\mu,\infty)$-isometric pair. Then $T^m_1$ is an isometry and $T^m_2=0$ or vice versa.
\end{proposition}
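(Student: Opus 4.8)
The plan is to reduce the whole statement to the elementary fact that a vector space is never the union of two proper subspaces. Throughout I assume $m \geq 1$ (if $m=0$ then $X=\{0\}$ and everything is trivial), and I set $M := m + \mu - 1$, so that $M \geq m$ since $\mu \geq 1$. The heart of the argument will be the two containments
\[
X_{1,|.|_\infty} \subseteq N(T_2^M) \quad\text{and}\quad X_{2,|.|_\infty} \subseteq N(T_1^M).
\]
Granting these, the partition $X = X_{1,|.|_\infty} \cup X_{2,|.|_\infty}$ from Theorem \ref{Th. fundamental prop. and partition for (m,infty)} immediately gives $X = N(T_1^M) \cup N(T_2^M)$, i.e.\ $X$ is a union of two subspaces. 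Since such a union forces one of the subspaces to contain the other, we conclude $T_1^M = 0$ or $T_2^M = 0$. By Theorem \ref{main theorem of paper}.(i) the sequences $n \mapsto \norm{T_j^n x}$ are constant for $n \geq m$, and as $M \geq m$ this already yields $T_1^m = 0$ or $T_2^m = 0$. In the case $T_2^m = 0$, Theorem \ref{main theorem of paper}.(ii) reduces to $\norm{T_1^m x}^p = \norm{x}^p$ for all $x$, so $T_1^m$ is an isometry; the case $T_1^m = 0$ is the symmetric ``vice versa''.

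The key step is the containment $X_{1,|.|_\infty} \subseteq N(T_2^M)$ (the other is identical with the roles of $T_1$ and $T_2$ exchanged). Let $x \in X_{1,|.|_\infty}$. By the preceding lemma there is a multi-index with second entry $\alpha_2 \leq \mu - 1$ such that $|x|_\infty = \norm{T_1^n T_2^{\alpha_2} x}$ for all $n \in \N$. Writing $\xi := T_2^{\alpha_2} x$ and reading off the cases $n = 0$ and $n = m$, I obtain $\norm{\xi} = |x|_\infty = \norm{T_1^m \xi}$. Feeding $\xi$ into Theorem \ref{main theorem of paper}.(ii), namely $\norm{T_1^m \xi}^p + \norm{T_2^m \xi}^p = \norm{\xi}^p$, forces $T_2^m \xi = 0$, that is $T_2^{m+\alpha_2} x = 0$; since $m + \alpha_2 \leq M$, this gives $T_2^M x = 0$, as desired.

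The main obstacle is precisely this translation. Membership of $x$ in $X_{1,|.|_\infty}$ is a statement about \emph{which} $|.|_\infty$-maximiser is attained along the $T_1$-orbit of $x$, and it must be converted into the purely algebraic assertion that a power of $T_2$ annihilates $x$. The device that bridges the two is to apply the $(1,p)$-identity of Theorem \ref{main theorem of paper}.(ii) not to $x$ itself but to the auxiliary vector $\xi = T_2^{\alpha_2} x$, for which the $T_1$-part of the identity is already saturated ($\norm{T_1^m \xi} = \norm{\xi}$), so that the $T_2$-part must vanish. Once this containment is established, the rest is the standard linear-algebraic observation about unions of two subspaces together with the collapsing of powers supplied by Theorem \ref{main theorem of paper}.(i); no appeal to completeness of $X$ is needed.
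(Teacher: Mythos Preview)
Your argument is correct and follows essentially the same route as the paper's: both proofs use the partition $X = X_{1,|.|_\infty}\cup X_{2,|.|_\infty}$, apply the $(1,p)$-identity of Theorem~\ref{main theorem of paper}.(ii) to the auxiliary vector $T_2^{\alpha_2}x$ to show that $X_{1,|.|_\infty}$ lies in the kernel of a power of $T_2$, and then invoke the fact that a vector space is not the union of two proper subspaces. The only cosmetic difference is that the paper uses Theorem~\ref{main theorem of paper}.(i) up front to collapse $\|T_2^{m+\alpha_2}x\|$ to $\|T_2^{m}x\|$ and lands directly in $N(T_2^m)$, whereas you first land in $N(T_2^{M})$ with $M=m+\mu-1$ and collapse afterwards.
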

\begin{proof}
By Theorem \ref{Th. fundamental prop. and partition for (m,infty)}, we have $X = X_{1,|.|_{\infty}} \cup X_{2,|.|_{\infty}}$. 

Let $x_1 \in X_{1,|.|_{\infty}}$. Then, by the previous lemma, there exists an $\alpha_2(x_1) \in \N$ with $\alpha_2(x_1) \leq \mu-1$ s.th. $|x_1|_{\infty} = \norm{T_1^nT_2^{\alpha_2(x_1)}x_1}$ for all $n \in \N$.

Furthermore, we have $\norm{x}^p = \norm{T_1^mx}^p+\norm{T_2^mx}^p$, for all $x \in X$, by Theorem \ref{main theorem of paper}.(ii). Replacing $x$ by $T_2^{\alpha_2(x_1)}x_1$gives
\begin{align*}
&\norm{T_2^{\alpha_2(x_1)}x_1}^p = \norm{T_1^mT_2^{\alpha_2(x_1)}x_1}^p + \norm{T_2^{m + \alpha_2(x_1)}x_1}^p \\
\Leftrightarrow \ \ &\norm{T_2^{\alpha_2(x_1)}x_1}^p = |x_1|^p_{\infty} + \norm{T_2^{m}x_1}^p.
\end{align*}
This implies $\norm{T_2^{\alpha_2(x_1)}x_1} = |x_1|_{\infty}$ and, moreover, $\norm{T_2^{m}x_1}=0$.

An analogous argument shows that $X_{2,|.|_{\infty}} \subset N(T^m_1)$. Hence, 
\begin{equation*}X = N(T_1^m) \cup N(T_2^m),\end{equation*} 
which forces $T_1^m = 0$ or $T^m_2 = 0$. The statement follows from $\norm{x}^p = \norm{T_1^mx}^p+\norm{T_2^mx}^p$, for all $x \in X$. 
\end{proof}

\bigskip
\hrule
\
\\

\begin{quote}
Philipp Hoffmann\\[1.5ex] 
Keywords International Ltd.\\
Philips House\\
South County Business Park\\
Dublin 18\\[1.5ex] 
\begin{tabular}{@{}l@{ }l}
\emph{email:} & \url{philipp.hoffmann@maths.ucd.ie}
\end{tabular}
\end{quote}


\begin{thebibliography}{99}
\bibitem{Ag} J. Agler, A disconjugacy theorem for Toeplitz operators, \emph{Am. J. Math.}, Vol. 112. No. 1 (1990), 1-14.
\bibitem{AgStanI} J. Agler and M. Stankus, $m$-isometric transformations of Hilbert space, I, \emph{Integr. equ. oper. 
theory}, Vol. 21, No. 4 (1995), 383-429.
\bibitem{Bay} F. Bayart, $m$-Isometries on Banach Spaces, \emph{Mathematische Nachrichten}, Vol. 284, No. 17-18 (2011), 
2141-2147.
\bibitem{BeMaMu} T. Bermúdez, A. Martinón and V. Müller, $(m,q)$-isometries on metric spaces, \emph{J. Operator Theory}, Vol. 72, No. 2 (2014), 313-329.
\bibitem{BeMaNe} T. Bermúdez, A. Martinón and E. Negrín, Weighted Shift Operators Which are m-Isometries,
\emph{Integr. equ. oper. theory}, Vol. 68, No. 3 (2010), 301-312.
\bibitem{Bo} F. Botelho, On the existence of $n$-isometries on $\ell_p$ spaces, \emph{Acta Sci. Math. (Szeged)}, Vol. 76, No. 1-2 (2010), 183-192.
\bibitem{HoMa} P. H. W. Hoffmann and M. Mackey, $(m,p)$-isometric and $(m,\infty)$-isometric operator tuples on normed spaces,
\emph{Asian-Eur. J. Math.}, Vol. 8, No. 2 (2015).
\bibitem{HoMaOS} P. Hoffmann, M. Mackey and M. Ó Searcóid, On the second parameter of an $(m,p)$-isometry, \emph{Integr. 
equ. oper. theory}, Vol. 71, No. 3 (2011), 389-405.
\bibitem{GleaRi} J. Gleason and S. Richter, $m$-Isometric Commuting Tuples of Operators on a Hilbert Space,
\emph{Integr. equ. oper. theory}, Vol. 56, No. 2 (2006), 181-196 .
\bibitem{Ri} S. Richter, Invariant subspaces of the Dirichlet shift, \emph{J. reine angew. Math.}, Vol. 386 (1988), 205-220.
\bibitem{SidAh} O.A. Sid Ahmed, $m$-isometric Operators on Banach Spaces, \emph{Asian-Eur. J. Math.}, Vol. 3, No. 1 (2010), 1-19.
\end{thebibliography}
\end{document}